\newcommand{\R}{\mathbb R}
\newcommand{\C}{\mathbb C}
\newcommand{\beqr}{\begin{eqnarray*}}
\newcommand{\eeqr}{\end{eqnarray*}}
\renewcommand{\Re}{\mathop {\rm Re}\nolimits}
  \newtheorem{theorem}{Theorem}[section]
  \newtheorem{corollary}[theorem]{Corollary}
  \newtheorem{lemma}[theorem]{Lemma}
  \theoremstyle{remark}
  \newtheorem{rem}[theorem]{Remark}
\newtheorem{question}[theorem]{Question}
\begin{document}

\title{$K$-spectral sets and intersections of disks of the Riemann sphere}

\author{Catalin {\sc Badea}, Bernhard{ \sc Beckermann} and Michel {\sc Crouzeix} }

\date{}

\maketitle

\begin{abstract}

We prove that if two closed disks $X_1$ and $X_2$ of the Riemann sphere
are spectral sets for a bounded linear operator $A$ on a
Hilbert space, then $X_1\cap X_2$ is a complete $(2+2/\sqrt{3})$-spectral
set for $A$. When the intersection $X_1\cap X_2$ is an annulus, this
result gives a positive answer to a question of A.L.~Shields (1974).
\end{abstract}

\section{Introduction and the statement of the main results.}
Let $X$ be a closed set in the complex plane and let $R(X)$ denote
the algebra of bounded rational functions on $X$, viewed as a
subalgebra of $C(\partial X)$ with the supremum norm $$\|f\|_X =
\sup \{ |f(x)| : x \in X\} = \sup \{ |f(x)| : x \in
\partial X\}. $$ Here $\partial X$ denotes the boundary of the set
$X$.

\subsection{Spectral and complete spectral sets.} Let $A \in \mathcal{L}(H)$
 be a bounded linear operator acting on a complex Hilbert space $H$.
 For a fixed constant $K > 0$, the set $X$ is said to be a
 $K$-\emph{spectral} set for $A$ if the spectrum $\sigma(A)$ of $A$
 is included in $X$ and the inequality $\|f(A)\| \le K\|f\|_X$ holds
 for every $f \in R(X)$. Notice that, for a rational
 function $f=p/q \in R(X)$,
 the poles of $f$ are outside of $X$, and the operator $f(A)$ is
 naturally defined as $f(A) = p(A)q(A)^{-1}$ or, equivalently,
 by the Riesz holomorphic functional
 calculus. The set $X$ is a \emph{spectral} set
 for $A$ if it is a $K$-spectral set with $K=1$. Thus $X$ is spectral
 for $A$ if and only if $\|\rho\| \le 1$,
 where $\rho : R(X) \mapsto \mathcal{L}(H)$ is the homomorphism
 given by $\rho(f) = f(A)$.

We let $M_n(R(X))$ denote the algebra of $n$ by $n$ matrices with entries from $R(X)$. If we let the $n$ by $n$ matrices have the operator norm that they inherit as linear transformations on the $n$-dimensional Hilbert space $\C^{n}$, then we can endow $M_n(R(X))$ with the norm
$$\|\left(f_{ij}\right)\|_X = \sup \{ \|\left(f_{ij}(x)\right)\| : x \in X\} = \sup \{ \|\left(f_{ij}(x)\right)\| : x \in \partial X\}. $$
In a similar fashion we endow $M_n(\mathcal{L}(H))$ with the norm it inherits by regarding an element $(A_{ij})$ in $M_n(\mathcal{L}(H))$ as an operator acting on the direct sum of $n$ copies of $H$. For a fixed constant $K > 0$, the set $X$ is said to be a \emph{complete} $K$-\emph{spectral} set for $A$ if $\sigma(A) \subset X$ and the inequality $\|(f_{ij}(A))\| \le K\|(f_{ij})\|_X$ holds for every matrix $(f_{ij}) \in M_n(R(X))$ and every $n$. In terms of the complete bounded norm (\cite{paul}) of the homomorphism $\rho$, this means that $\|\rho\|_{cb} \le K$. A \emph{complete spectral} set is a complete $K$-spectral set with $K=1$.

Spectral sets were introduced and studied by J. von Neumann \cite{Neu} in 1951. In the same paper von Neumann proved that a closed disk $\{z \in \C : |z-\alpha| \le r\}$ is a spectral set for $A$ if and only if $\|A -\alpha I \| \le r$. Also \cite{Neu}, the closed set $\{z \in \C : |z-\alpha| \ge r\}$ is spectral for $A \in \mathcal{L}(H)$ if and only if $\|(A -\alpha I)^{-1} \| \le r^{-1}$. We refer to two books \cite{conway, paul} for a survey of known properties of spectral and complete spectral sets.

\subsection{The annulus as a $K$-spectral set} Let $r$ and $R$ be
two positive constants with $r < R$. Let $A \in \mathcal{L}(H)$ be
an invertible operator such that $\|A\| \le R$ and $\|A^{-1}\| \le
1/r$. Then $X_1 = \{z \in \C : |z| \le R\}$ and $X_2 = \{z \in \C
: |z| \ge r\}$ are spectral sets for $A$. The annulus $$X(r,R) =
\{z \in \C : r \le |z| \le R\} = X_1 \cap X_2$$ is not necessarily
spectral for a given invertible operator $A$. Examples can be
found in \cite{williams, misra, paulsen}. Given an invertible
operator $A$ with $\|A\| \le R$ and $\|A^{-1}\| \le 1/r$, Shields
proved in \cite{shields} that $X(r,R)$ is a $K$-spectral set for $A$ with $K =
2+ \left(\left( R+r\right)/\left(R-r\right)\right)^{1/2}$. The
following questions were asked by Shields (see \cite[Question
7]{shields}):

\begin{question}\label{q1} Find the best constant $K(r,R)$, i.e.,
the smallest constant $C$ such that $X(r,R)$ is a $C$-spectral set
for all invertible $A\in \mathcal L(H)$ with $\| A \| \le R$ and
$\| A^{-1} \|\leq r^{-1}$.
\end{question}
\begin{question}\label{q2}
 Fixing (for instance) $R$, is this best constant bounded (as a function of $r$) ?
\end{question}

In analogy with Question~\ref{q1}, we will denote by $K_{cb}(r,R)$
the smallest constant $C$ such that $X(r,R)$ is a complete
$C$-spectral set. The same proof of Shields (see also \cite{dopa,
paul}) shows that in fact $K_{cb}(r,R) \le 2+ \left(\left(
R+r\right)/\left(R-r\right)\right)^{1/2}$.
\subsection{Statement of the main results.} The aim of the present note
is to study the intersection of two closed disks of the Riemann sphere which
are spectral sets for a Hilbert space bounded linear operator. In the
case of the annulus we
give an estimate for $K(r,R)$ (a partial answer to Question \ref{q1})
which allows to give a positive answer to Question \ref{q2}.

We describe now the main results of this paper. By possibly
multiplying the operator by a scalar, we see that
$K(r,R)=K(\sqrt{r/R},\sqrt{R/r})$. This allows to assume, 
without any loss of generality, that $r = R^{-1}$. We have
the following result.

\begin{theorem}\label{thm1}
   Let $R > 1$, $X = X(R^{-1},R)= \{z \in
\C : R^{-1} \le |z| \le R\}$, and denote by $K(R) = K(R^{-1},R)$
(and $K_{cb}(R) = K_{cb}(R^{-1},R)$, respectively), the smallest
constant $C$ such that $X$ is a $C$-spectral set (and a complete
$C$-spectral set, respectively) for any invertible $A \in
\mathcal{L}(H)$ verifying $\|A\| \le R$ and $\|A^{-1}\| \le R$.
Then \beqr \frac{2}{1+R^{-2}} & < & K(R) \le K_{cb}(R) \\
 &\le &
 2 + \min\left( \sqrt{\frac{R^2+2R+1}{R^2+R+1}} ,
 \sqrt{\frac{R^2+1}{R^2-1}}\right) \leq 2 + \frac{2}{\sqrt{3}} < 3.2.
\eeqr
\end{theorem}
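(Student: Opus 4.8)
The plan is to treat the chain of inequalities in four pieces, two of which are routine and two of which carry the content. The bound $K(R)\le K_{cb}(R)$ is immediate, since the defining inequalities for a complete $C$-spectral set contain the scalar case $n=1$. For the last inequality I would note that $3(R+1)^2-4(R^2+R+1)=-(R-1)^2\le 0$, so $\frac{(R+1)^2}{R^2+R+1}\le\frac43$ for every $R>1$; hence $\min(\cdots)\le\sqrt{4/3}=2/\sqrt3$ and $2+2/\sqrt3=3.1547\ldots<3.2$. Everything else reduces to the lower bound on $K(R)$ and the upper bound on $K_{cb}(R)$.

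For the lower bound I would exhibit a single admissible $2\times 2$ matrix. Take $A=\begin{pmatrix}0&R\\R^{-1}&0\end{pmatrix}$, so that $A^2=I$, $A^{-1}=A$, $\sigma(A)=\{1,-1\}\subset X$, and $\|A\|=\|A^{-1}\|=R$. Testing against $f(z)=\frac12(z+z^{-1})\in R(X)$ gives $f(A)=\frac12(A+A^{-1})=A$, so $\|f(A)\|=R$, while a direct computation on the two bounding circles gives $\|f\|_X=\frac12(R+R^{-1})$. The ratio is $\|f(A)\|/\|f\|_X=2R^2/(R^2+1)=2/(1+R^{-2})$, proving $K(R)\ge 2/(1+R^{-2})$. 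To reach the \emph{strict} inequality I would perturb $f$ inside the family $f_t=f+tg$ with $g\in R(X)$ chosen to vanish at $\pm1$, so that $f_t(A)=A$ is unchanged, while strictly decreasing $|f|$ at each of the four points $\pm R,\pm R^{-1}$ at which $\|f\|_X$ is attained. A convenient choice is $g(z)=(z-z^{-1})(z^{-2}-z^2)$, which satisfies $g(\pm1)=0$ and, using $f(1/z)=f(z)$ and $g(1/z)=g(z)$, has $\Re(\bar f g)<0$ at all four maximizers; hence for small $t>0$ one gets $\|f_t\|_X<\|f\|_X$ and a ratio strictly larger than $2/(1+R^{-2})$.

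The substantial part is the upper bound on $K_{cb}(R)$, where I would use a two–von Neumann decomposition. Since $\|A\|\le R$ and $\|A^{-1}\|\le R$, the disk $X_1=\{|z|\le R\}$ and the exterior disk $X_2=\{|z|\ge R^{-1}\}$ of the Riemann sphere are spectral sets for $A$ by the two results of von Neumann quoted above; moreover each is a \emph{complete} spectral set, because a (generalized) disk carries a normal boundary dilation, so $\|(f^{}_{ij}(A))\|\le\|(f^{}_{ij})\|_{X_i}$ also holds with matrix coefficients. Given $(f_{ij})\in M_n(R(X))$, I would split each entry across the two bounding circles $|z|=R$ and $|z|=R^{-1}$ by its Laurent/partial-fraction decomposition, $f_{ij}=f_{ij}^{(1)}+f_{ij}^{(2)}$ with $f^{(1)}\in R(X_1)$ and $f^{(2)}\in R(X_2)$, and estimate $\|(f_{ij}(A))\|\le\|(f^{(1)}_{ij})\|_{X_1}+\|(f^{(2)}_{ij})\|_{X_2}$. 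Everything then reduces to bounding the cost of the splitting, $\|F^{(1)}\|_{X_1}+\|F^{(2)}\|_{X_2}\le K\|F\|_X$. I would obtain the two terms of the minimum from two independent estimates of this cost: an $\ell^2$/Hardy-space estimate, in which the square-root factor is exactly $\big(\sum_{n\in\Z}R^{-2|n|}\big)^{1/2}=\sqrt{(R^2+1)/(R^2-1)}$ (the $\ell^2$ norm of the bilateral geometric weight that controls the Laurent coefficients simultaneously on both circles), reproducing Shields' constant; and the sharper contour estimate coming from the general two-disk geometry, which yields $\sqrt{(R+1)^2/(R^2+R+1)}$ and the uniform value $2/\sqrt3$.

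The main obstacle is precisely this cost estimate, and it has two faces. First, the naive splitting is the Riesz projection, which is unbounded on $L^\infty$, so one cannot bound $\|F^{(1)}\|_{X_1}$ entrywise by a multiple of $\|F\|_X$; the additive constant $2$ is what one pays for handling the two one-sided parts (equivalently the two boundary integrals) separately, and the genuine work is to extract the finite square-root correction by passing through an $L^2$ estimate and recovering a sup-norm bound. Second, and essential for the complete statement, one must check that the splitting is implemented by scalar Fourier multipliers acting entrywise, whose completely bounded norm equals their norm; this is what promotes the scalar estimate to the matrix level and produces a bound on $K_{cb}(R)$ rather than merely $K(R)$.
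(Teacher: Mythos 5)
Your reduction of the last inequality to $3(R+1)^2-4(R^2+R+1)=-(R-1)^2\le 0$ is fine, and $K(R)\le K_{cb}(R)$ is indeed immediate. The genuine gap is in the upper bound: of the two terms in the minimum, you only give a real argument for $\sqrt{(R^2+1)/(R^2-1)}$ (which is exactly Shields' constant, and the paper simply cites \cite{shields} for it). The term $\sqrt{(R^2+2R+1)/(R^2+R+1)}$ is dispatched with the phrase ``the sharper contour estimate coming from the general two-disk geometry,'' which is not an argument. This term is the entire point of the theorem: since $(R^2+1)/(R^2-1)\to\infty$ as $R\to 1$ (and exceeds $4/3$ for all $R<\sqrt 7$), the Shields term alone gives no uniform bound and hence no answer to Shields' Question~1.2. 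Moreover, I doubt the additive Laurent splitting $f=f^{(1)}+f^{(2)}$ with von Neumann on each disk can produce this term at all. The paper's mechanism is quite different: a Cauchy-integral representation of $f(A)$ (Lemma~\ref{le}) with \emph{three} terms --- two boundary integrals over $|z|=R$ and $|z|=R^{-1}$ whose operator kernels $\mu(\theta,A)$, $\mu(-\theta,A^{-1})$ are positive with total integral $I$ (each contributing $1$ to the constant), plus a third integral over the intermediate circle $|z|=1$ whose kernel $M(\theta,A^*)^{-1}$ is controlled from below via the polar decomposition $A=UG$ (Lemma~\ref{Lemma 41}) and an explicit Poisson-type integral $J(e^{i\varphi})$ evaluating to $\sqrt{(R^2+2R+1)/(R^2+R+1)}$. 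Positivity of the kernels is also what upgrades the estimate to the completely bounded norm, so your worry about scalar Fourier multipliers acting entrywise does not arise in the paper's route. Until you supply a proof of the first term of the minimum, the chain of inequalities is not established.

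Your lower bound, by contrast, is correct and takes a genuinely different, more elementary route. The paper uses the Jordan block $A(t_0)=\bigl(\begin{smallmatrix}1&t_0\\0&1\end{smallmatrix}\bigr)$ with $t_0=R-R^{-1}$ together with Simha's evaluation of the infinitesimal Carath\'eodory metric of the annulus, which yields the sharper bound $\gamma(R)$ and then $\gamma(R)>2/(1+R^{-2})$ from the infinite product. Your involution $A=\bigl(\begin{smallmatrix}0&R\\R^{-1}&0\end{smallmatrix}\bigr)$ with $f(z)=\tfrac12(z+z^{-1})$ gives the ratio $2/(1+R^{-2})$ by a two-line computation, and the perturbation $f+tg$ with $g(z)=-(z-z^{-1})^2(z+z^{-1})$ (vanishing on $\sigma(A)=\{\pm1\}$, hence leaving $f_t(A)=A$ unchanged, while $\Re(\bar f g)<0$ at the four maximizers $\pm R,\pm R^{-1}$) does yield the strict inequality after the standard compactness argument splitting $\partial X$ into a neighbourhood of the maximizers and its complement. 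What you lose relative to the paper is only the sharper intermediate bound $\gamma(R)$, which tends to $\pi/2$ rather than $1$ as $R\to 1$; what you gain is independence from the function-theoretic input of \cite{simha}.
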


In particular $K(R)$ and $K_{cb}(R)$ are bounded functions of $R$.
We obtain the following consequence about normal dilations.

\begin{corollary}\label{cor1}
 Let $R > 1$. Let $A \in \mathcal{L}(H)$ be an invertible operator
verifying $\|A\| \le R$ and $\|A^{-1}\| \le R$. Let $X = \{z \in \C : R^{-1} \le |z| \le R\}$. Then there exist an invertible operator $L \in \mathcal{L}(H)$ with $\|L\|\cdot\|L^{-1}\| \le 2+2/\sqrt{3}$, a larger Hilbert space $\mathcal{H} \supset H$ and an invertible normal operator $N \in \mathcal{L}(\mathcal{H})$ with $\sigma(N) \subset \partial X$ such that
$$ L^{-1}f(A)L = P_Hf(N)\mid_H \quad (f \in R(X)) .$$
Here $P_H$ is the orthogonal projection of $\mathcal{H}$ onto $H$.
\end{corollary}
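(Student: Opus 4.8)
The plan is to deduce the corollary from Theorem~\ref{thm1} by combining a similarity theorem of Paulsen with a dilation theorem of Arveson, so that all the substantive work is already contained in Theorem~\ref{thm1}. First I would record the consequence of that theorem that $X$ is a complete $K_{cb}(R)$-spectral set for $A$ with $K_{cb}(R)\le 2+2/\sqrt3$. In the language of the introduction this says precisely that the unital homomorphism $\rho:R(X)\to\mathcal{L}(H)$ given by $\rho(f)=f(A)$ is completely bounded with $\|\rho\|_{cb}\le 2+2/\sqrt3$.

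Next I would invoke Paulsen's similarity theorem (see \cite{paul}): a unital homomorphism of an operator algebra is completely bounded if and only if it is similar to a completely contractive one, and moreover the implementing similarity can be chosen so that the product of the norms of it and its inverse equals $\|\rho\|_{cb}$. Applying this to $\rho$ produces an invertible $L\in\mathcal{L}(H)$ with $\|L\|\,\|L^{-1}\|\le\|\rho\|_{cb}\le 2+2/\sqrt3$ such that $f\mapsto L^{-1}f(A)L$ is a completely contractive unital homomorphism on $R(X)$. Since similarity commutes with the rational functional calculus, $L^{-1}f(A)L=f(B)$ for $B=L^{-1}AL$, so this step says exactly that $X$ is a complete spectral set for $B$.

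Finally I would apply Arveson's dilation theorem (see \cite{paul}): if $X$ is a complete spectral set for $B$, then $B$ admits a normal $\partial X$-dilation, that is, there exist a larger Hilbert space $\mathcal{H}\supset H$ and a normal operator $N\in\mathcal{L}(\mathcal{H})$ whose spectrum lies on the Shilov boundary of $R(X)$ and satisfies $f(B)=P_Hf(N)\mid_H$ for every $f\in R(X)$. Chaining the two identities gives $L^{-1}f(A)L=f(B)=P_Hf(N)\mid_H$, which is the asserted formula; the estimate $\|L\|\,\|L^{-1}\|\le 2+2/\sqrt3$ comes from the Paulsen step. The inclusion $\sigma(N)\subset\partial X$ together with the invertibility of $N$ follows from identifying the Shilov boundary of $R(X)$ with the topological boundary $\partial X$, namely the two bounding circles $|z|=R^{-1}$ and $|z|=R$ of the annulus, which is disjoint from the origin.

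The only point requiring a small separate verification is this identification of the Shilov boundary of $R(X)$ with $\partial X$; everything else is a direct citation. In this sense there is no genuine obstacle in the corollary itself, since all of the real difficulty has been absorbed into the complete-$K$-spectral bound of Theorem~\ref{thm1}: the role of that theorem is exactly to supply a finite value of $\|\rho\|_{cb}$, after which the Paulsen--Arveson machinery converts it mechanically into the similarity $L$ and the normal dilation $N$.
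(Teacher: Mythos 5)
Your proposal is correct and follows essentially the same route as the paper: Theorem~\ref{thm1} gives $\|\rho\|_{cb}\le 2+2/\sqrt{3}$, Paulsen's similarity theorem produces the invertible $L$ making $L^{-1}\rho(\cdot)L$ completely contractive, and Arveson's dilation theorem then yields the normal dilation $N$ with $\sigma(N)\subset\partial X$. The only difference is that you explicitly flag the identification of the Shilov boundary of $R(X)$ with the topological boundary $\partial X$, a point the paper's proof leaves implicit.
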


Besides the annulus, (complete) $K$-spectral sets which are
intersections of spectral disks of the complex plane have been
considered in \cite{stampfliI, stampfliII, lewis, crde, becr} ;
we refer to \cite{becr} for a discussion of the best possible
constant $K$. In the second part of our paper we consider the more
general case of intersection of two closed disks $X_1$ and $X_2$ of the
Riemann sphere. We prove the following result.

\begin{theorem}\label{thm3}
Let $X_1$ and $X_2$ be two closed disks of the
Riemann sphere. If $X_1$ and $X_2$ are spectral sets for a bounded
operator $A$ in a Hilbert space, then $X_1\cap X_2$ is a complete
$(2+2/\sqrt{3})$-spectral set for $A$.
\end{theorem}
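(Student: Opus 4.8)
The plan is to reduce the general case of two disks on the Riemann sphere to the annulus case already settled in Theorem~\ref{thm1}, using Möbius transformations. My strategy is the following.

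\medskip

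\textit{Reduction via Möbius maps.}
First I would recall that a ``disk of the Riemann sphere'' is either an ordinary closed disk $\{|z-\alpha|\le r\}$, the complement of an open disk $\{|z-\alpha|\ge r\}$, or a closed half-plane. The key structural fact is that Möbius transformations $\varphi(z)=(az+b)/(cz+d)$ act transitively on (unordered) pairs of such disks up to their intersection type, and they behave well with respect to $K$-spectrality: if $\varphi$ is a Möbius map with $\varphi(\sigma(A))$ avoiding the pole of $\varphi$, then $X$ is a (complete) $K$-spectral set for $A$ if and only if $\varphi(X)$ is a (complete) $K$-spectral set for $\varphi(A)$, since $\varphi$ induces an isometric algebra isomorphism $R(X)\to R(\varphi(X))$ via $f\mapsto f\circ\varphi^{-1}$, and $f(A)=(f\circ\varphi^{-1})(\varphi(A))$ by the functional calculus. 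Thus the constant $K_{cb}$ is a Möbius invariant of the configuration $(X_1,X_2)$.

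\medskip

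\textit{Classification of intersection types.}
Next I would enumerate the possible geometric types of $X_1\cap X_2$ up to Möbius equivalence. If $\partial X_1$ and $\partial X_2$ are tangent, or cross transversally, or are disjoint, and taking into account which side of each circle the disk lies on, one reduces to a short list of canonical configurations: a single disk or half-plane (when one contains the other, trivial since then the intersection is one of them and $K=1$), a lens/lune bounded by two circular arcs, a strip, and — the essential case — an \emph{annulus} $X(R^{-1},R)$. The crucial point is that when the two boundary circles are \emph{disjoint} (non-crossing, neither disk inside the other), the configuration is Möbius-equivalent to a concentric annulus: two disjoint circles on the sphere can always be mapped to two concentric circles by a suitable Möbius map. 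For the transversal and tangent (lens-type) cases, the intersection is a convex-type region for which the constant $2+2/\sqrt3$ is already known or easier, cf.\ the references \cite{crde, becr}.

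\medskip

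\textit{Assembling the bound.}
Having normalized to the annulus, I would invoke Theorem~\ref{thm1}: the hypothesis that $X_1$ and $X_2$ are \emph{spectral} sets for $A$ translates, after the Möbius change of variable, exactly into $\|\widetilde A\|\le R$ and $\|\widetilde A^{-1}\|\le R$ for the transformed operator $\widetilde A=\varphi(A)$ (using von Neumann's characterization of the disk and the exterior-of-disk as spectral sets, quoted from \cite{Neu} in the introduction). Theorem~\ref{thm1} then gives $K_{cb}(R)\le 2+2/\sqrt3$ uniformly in $R$, and transporting back through $\varphi$ yields the claim for the annulus case. The remaining lens/strip cases give a constant no larger than $2+2/\sqrt3$ by the same reduction combined with the cited disk-intersection results.

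\medskip

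\textit{Main obstacle.}
The hard part will be verifying that the Möbius normalization is compatible with the spectral-set hypothesis in every configuration, and in particular handling the disks of the Riemann sphere that pass through or surround the point at infinity: one must check that the pole $z=-d/c$ of $\varphi$ stays off $\sigma(A)$ (equivalently that the relevant resolvent exists as a bounded operator), so that $\varphi(A)$ is well defined and the isometry $f\mapsto f\circ\varphi^{-1}$ genuinely carries $R(X_1\cap X_2)$ onto $R(X(R^{-1},R))$. Managing this in the borderline (tangent) cases, where the annulus degenerates, is the delicate point; otherwise the argument is a clean transfer of Theorem~\ref{thm1} along the Möbius group.
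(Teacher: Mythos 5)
Your overall strategy is exactly the paper's: classify the mutual positions of the two boundary circles, transport everything along a M\"obius map (using that spectrality of a disk is preserved under $\varphi$, cf.\ \cite[\S~154, Lemma~2]{riesz}), send disjoint boundary circles to concentric ones and invoke Theorem~\ref{thm1}, and send a two-point lens intersection to a sector of half-planes handled by \cite{crde,becr}. So the skeleton is right. But the step you defer to the ``main obstacle'' paragraph is a genuine gap, not a routine verification, and your proposed fallback for it does not work: a \emph{tangent} configuration ($\partial X_1\cap\partial X_2$ a single point, with $X_1\cap X_2$ neither a singleton nor a sector nor a strip) is not a lens and is not convex, so it is not covered by \cite{crde,becr}; and it is also not M\"obius-equivalent to a concentric annulus, so Theorem~\ref{thm1} does not apply to it directly. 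Likewise, in the lens case your map $\varphi(z)=(\lambda_1-z)^{-1}$ fails precisely when the boundary intersection point $\lambda_1$ lies in $\sigma(A)$, and you give no way around this.

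The missing idea, which the paper supplies, is a perturbation-and-limit argument: replace one disk $X_1$ (of radius $R_1$) by a concentric disk $X_1'\supset X_1$ of radius $R_1\pm\varepsilon$. Since $X_1'\supset X_1$, the set $X_1'$ is again a spectral set for $A$, and for small $\varepsilon>0$ the perturbed configuration falls into a nondegenerate case (disjoint boundaries for the tangent situation, or two boundary intersection points avoiding $\sigma(A)$ for the lens situation). One then applies the already-proved cases to $X_1'\cap X_2$ with the uniform constant $2+2/\sqrt3$ and lets $\varepsilon\to 0$, using that $\|(f_{ij})\|_{X_1'\cap X_2}\to\|(f_{ij})\|_{X_1\cap X_2}$. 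You should also record the two trivial degenerate cases that your classification omits: $X_1\cap X_2$ a single point (then $A=\lambda I$) and $X_1\cap X_2$ a full circle or line (then $A$ is normal with spectrum there), both giving complete spectral sets with constant $1$. With these additions your argument closes and coincides with the paper's proof.
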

This theorem extends previously known results concerning the
intersection of two disks in $\C$ to not necessarily convex or
simply connected $X_1\cap X_2$. Note that the case of finitely
connected compact sets has been studied in \cite{dopa, paul},
however, without a uniform control on the constant $K$.

Note also that, if we consider two distinct bounded, convex and
closed subsets $X_1$ and $X_2$ of the complex
plane, and if we assume that  $X_1$ and $X_2$ are spectral sets
for $A$, then 
$X_1\cap X_2$ is a
complete $11.08$-spectral set for $A$. Indeed, the fact that $X_j$
is a spectral set for $A$ implies that the numerical range
$W(A)=\{\langle Ax, x\rangle : \|x\| = 1\}$ is included in $X_j$, $j=1,2$, and according to \cite{crzx} the closure of the numerical range $W(A)$ is a complete $11.08$-spectral set for $A$. However, the result from \cite{crzx} does not imply a solution of Shields' Question \ref{q2}. We refer also to \cite{PS, bcd, crzx} for some normal dilation results for the numerical range, in the spirit of Corollary \ref{cor1}. 

%
%
%

%
%
%

The remainder of the paper is organized as
follows: we first show in \S 2 that Theorem \ref{thm1} together
with some results from \cite{crde,becr} implies Theorem
\ref{thm3}. Our proof of Theorem \ref{thm1} is based on a
representation formula for $f(A)$ established in \S 3. Finally,
the proofs of Theorem~\ref{thm1} and Corollary~\ref{cor1} are
provided in \S 4.

\section{Proof of Theorem \ref{thm3} using Theorem \ref{thm1}}
Let $X_1$ and $X_2$ be two closed disks of the Riemann sphere,
which are spectral sets for a bounded linear operator $A$ in a
Hilbert space. Here six different situations have to be
considered, see Figure~\ref{figure1}.

\begin{figure}[t]
 \centerline{\epsfig{file=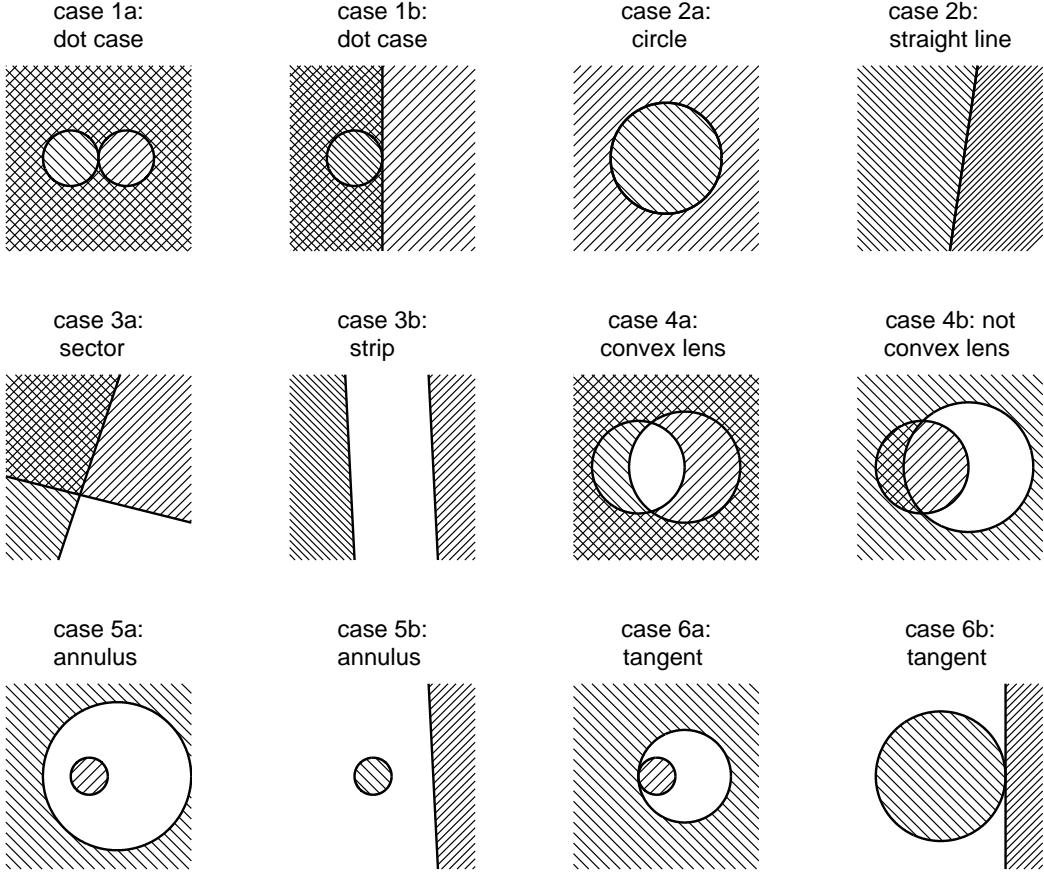,scale=.9}}
 \caption{{\it The six different cases occurring by considering
 intersections of closed disks on the Riemann
 sphere.}}\label{figure1}
\end{figure}

\begin{description}
\item[Case 1:] $X_1\cap X_2=\{\lambda\}$ is a singleton.
Then we have $A=\lambda I$ and
$X_1\cap X_2$ clearly is a complete spectral set for $A$.

\item[Case 2:] $X_1\cap X_2$ is a circle or a straight line.
Then $A$ is a normal operator with spectrum $\sigma(A)$ contained
in $X_1\cap X_2$. This yields that  $X_1\cap X_2$ is a complete
spectral set for $A$.

\item[Case 3:] $X_1\cap X_2$ is a convex sector or a strip of the complex plane.
In this case, both $X_1$ and $X_2$ are half-planes, and a closed
half-plane $\Pi$ is a
 spectral set for $A$ if and only if the numerical range $W(A)$ is a subset of $\Pi$.
Thus $W(A) \subset X_1 \cap X_2$.
 It follows from \cite{crde} that $X_1\cap X_2$ is a complete
$K$-spectral set, with $K\leq 2+2/\sqrt3$.

 \item[Case 4:] $\partial X_1\cap \partial X_2=\{\lambda_1,\lambda_2\}$ is a set
 consisting of two distinct points of $\C$. Here $X_1
 \cap X_2$ is lens-shaped. If it is in addition convex, then from
 \cite{becr} we know that $X_1 \cap X_2$ is a complete $K$-spectral set,
 with $K\leq 2+2/\sqrt3$. The proof for not convex lenses is the
 same, we repeat here the main idea for the sake of completeness.
 Let us first assume
 that $\lambda_1\notin \sigma(A)$ and
 set $B=\varphi(A)$ with $\varphi(z)=(\lambda_1\!-\!z)^{-1}$
 and $Y_j=\varphi(X_j)$, $j=1,2$.
 Then both $Y_j$ are closed half-planes.
The von Neumann inequality for disks shows that $Y_j$ are spectral
sets for $B$, see also \cite[\S~154, Lemma~2]{riesz}. It follows
from the previous case that $Y_1\cap Y_2$ is a complete
$K$-spectral set for $B$ and thus $X_1 \cap X_2$ is a complete
$K$-spectral set for $A$, with the same constant $K$. Finally, if
$\lambda_1 \in \sigma(A)$, we can replace the disk $X_1$ of the
Riemann sphere, of radius $R_1$, by a concentric disk $X_1'\supset
X_1$, of radius $R_1\pm\varepsilon$. Then, for $\varepsilon>0$
small enough, $\partial X'_1\cap
\partial X_2=\{\lambda'_1,\lambda'_2\}$ is still a set with two
distinct points of $\C$, the set $X'_1$ is a spectral set for $A$
and $\lambda'_1\notin \sigma(A)$. We conclude that $X_1\cap X_2$
is a complete $K$-spectral set for $A$ by letting $\varepsilon\to
0$.

 \item[Case 5:] $\partial X_1\cap \partial X_2=\emptyset$, but $X_1\cap
 X_2$ is not a strip. For the special case
 $X_1\cap X_2=\{z\in \C\,; R^{-1}\leq |z|\leq R\}$, $R>1$,
Theorem \ref{thm1} implies that
 $X_1\cap X_2$ is a complete $(2+2/\sqrt{3})$-spectral set for $A$.
 In the general case, we may find $R>1$ and a linear fractional
 transformation $\varphi$ such that $\varphi(X_1)=\{z\in \C\,; |z|\leq R\}$
 and $\varphi(X_2)=\{z\in \C\,; |z|\geq R^{-1}\}$.
 Then, setting $B=\varphi(A)$ and $Y_j=\varphi(X_j)$, $j=1,2$, we have
 that $Y_j$ is a spectral set for $B$, see also \cite[\S~154, Lemma~2]{riesz}.
 Thus $\{z\in \C\,; R^{-1}\leq |z|\leq R\}=\varphi(X_1\cap X_2)$
 is a complete $(2+2/\sqrt 3)$-spectral set for $B$, which is
 equivalent to $X_1\cap X_2$ is a complete $(2+2/\sqrt 3)$-spectral
 set for $A$.
 \item[Case 6:] $\partial X_1\cap \partial X_2=\{\lambda\}$ is reduced to a
 single point, but $X_1\cap X_2$ is neither a singleton, nor a sector nor a strip.
In this case
at least one of the sets $X_j$, $j=1,2$, is the interior or the exterior of a disk and the boundaries of the sets $X_j$ are tangent in one point. We can replace the disk, say $X_1$,
 of radius $R_1$, by a concentric disk  $X_1'\supset X_1$,
 of radius $R_1\pm\varepsilon$. Then,
 for $\varepsilon>0$ small enough,
 $\partial X'_1\cap \partial X_2=\emptyset$, and we obtain from
 the previous case that $X_1\cap X_2$ is a complete $K$-spectral set
 for $A$ by letting $\varepsilon\to 0$.
 \end{description}

\section{A decomposition lemma for annuli}
In order to give a proof of the upper bound of Theorem \ref{thm1} we need the following representation formula for $f(A)$.

\begin{lemma}\label{le}
Let $A \in \mathcal{L}(H)$ be an operator satisfying $\|A\|<R$ and $\|A^{-1}\|<R$. We set $r=1/R$ and denote by $X$ the annulus $X=X(R^{-1},R)=\{z\in \C\,; r\leq |z|\leq R\}$. For any bounded rational function $f$ on $X$, we have the representation formula
\begin{equation*}f(A)=\int_0^{2\pi}\!\!  f(Re^{i\theta})\,\mu(\theta,A)\,d\theta+
\int_0^{2\pi} \!\!  f(re^{i\theta})\,\mu(-\theta,A^{-1})\,d\theta+
\int_0^{2\pi}\!\!  f(e^{i\theta})\,M(\theta,A^*)^{-1}\,d\theta,
\end{equation*}
where \hskip-.4cm
\begin{align*}
\mu(\theta,A)&=\frac{1}{4\pi}\big((1\!+\!e^{-i\theta}r A)(1\!-\!e^{-i\theta}r A)^{-1}+(1\!+\!e^{i\theta}r A^*)(1\!-\!e^{i\theta} rA^*)^{-1}\big),  \quad\hbox{and}\\
M(\theta, A^*)&=\frac{2\pi}{R^2-r^2}(R^2+r^2-(e^{i\theta}A^*)^{-1}-e^{i\theta}A^*).
\end{align*}
\end{lemma}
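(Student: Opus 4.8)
The plan is to prove the identity first for the Laurent monomials $f(z)=z^{k}$, $k\in\Z$, and then to pass to arbitrary $f\in R(X)$ by linearity and a convergence argument. Since $\|A\|<R$ and $\|A^{-1}\|<R=r^{-1}$, the spectrum $\sigma(A)$ lies strictly inside the open annulus $\{r<|z|<R\}$, so every $f\in R(X)$ (whose poles lie off the closed annulus) has a Laurent expansion $f(z)=\sum_{k\in\Z}c_{k}z^{k}$ that converges uniformly on $X$ and, evaluated at $A$, converges in operator norm to $f(A)=\sum_{k}c_{k}A^{k}$. As the three kernels are bounded in $\theta$ and the partial sums of the Laurent series converge uniformly on each of the three circles, one may integrate term by term; it therefore suffices to establish the formula for each $z^{k}$.

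For each monomial the computation reduces to expanding the kernels into norm-convergent operator Fourier series. Writing $T(\theta)=(1+e^{-i\theta}rA)(1-e^{-i\theta}rA)^{-1}$, the estimate $\|rA\|<1$ gives $T(\theta)=I+2\sum_{n\ge1}e^{-in\theta}r^{n}A^{n}$, so that $\mu(\theta,A)=\frac{1}{4\pi}\big(T(\theta)+T(\theta)^{*}\big)$ becomes an explicit operator Fourier series; the same applies to $\mu(-\theta,A^{-1})$ once one notes $\|rA^{-1}\|<1$. For the third kernel the decisive algebraic step is the factorization
\[
R^{2}+r^{2}-(e^{i\theta}A^{*})^{-1}-e^{i\theta}A^{*}=(R^{2}-e^{i\theta}A^{*})\,\big(1-r^{2}e^{-i\theta}(A^{*})^{-1}\big),
\]
valid because $R^{2}r^{2}=1$ and all factors commute. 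Inverting the two factors as geometric series — legitimate since $\|R^{-2}A^{*}\|<1$ and $\|r^{2}(A^{*})^{-1}\|<1$ — produces a double Fourier series for $M(\theta,A^{*})^{-1}$.

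I would then integrate each series against $f(Re^{i\theta})$, $f(re^{i\theta})$ and $f(e^{i\theta})$ on the respective circles, using $\int_{0}^{2\pi}e^{im\theta}\,d\theta=2\pi\delta_{m0}$ together with $Rr=1$ and $R^{-1}=r$. The first integral reproduces the analytic part $\sum_{k\ge0}c_{k}A^{k}$ plus a spurious tail $\sum_{n\ge1}r^{2n}c_{-n}(A^{*})^{n}$; the second reproduces $c_{0}+\sum_{n\ge1}c_{-n}A^{-n}$ plus a spurious tail $\sum_{n\ge1}r^{2n}c_{n}\big((A^{*})^{-1}\big)^{n}$; and, after the factor $(R^{2}-r^{2})^{-1}$ combines with $\sum_{m}r^{2m}R^{-2-2m}$ to collapse, the third integral reproduces exactly the constant $c_{0}$ together with these two spurious tails. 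Forming the combination prescribed by the formula, the doubly counted constant and the two cross-terms cancel, leaving $\sum_{k\in\Z}c_{k}A^{k}=f(A)$, which is the desired identity.

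The main obstacle is the treatment of the third kernel: one must invert $M(\theta,A^{*})$ explicitly and verify that, upon integration, its double Fourier series collapses to precisely the constant and cross-terms left over by the two Poisson integrals $\mu(\theta,A)$ and $\mu(-\theta,A^{-1})$. The factorization above is exactly what makes this feasible, reducing the inversion to a product of two commuting geometric series; what remains is the careful matching of coefficients — including the consistency check $\int_{0}^{2\pi}\mu(\theta,A)\,d\theta=I$ — and the (routine) justification of interchanging the infinite sums with the integrals.
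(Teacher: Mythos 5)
Your overall strategy --- expand $f$ in its Laurent series, expand the three kernels in norm-convergent operator Fourier series, and match coefficients --- is legitimate and is a genuinely different route from the paper's, which instead splits the Cauchy integral $f(A)=\frac{1}{2\pi i}\int_{\partial X}f(\sigma)(\sigma-A)^{-1}\,d\sigma$ into a part $F_1$ yielding the two Poisson-type kernels $\mu$ and a remainder $F_2$ that is pushed onto the unit circle by a contour deformation. Your factorization $R^2+r^2-(e^{i\theta}A^*)^{-1}-e^{i\theta}A^*=(R^2-e^{i\theta}A^*)\big(1-r^2e^{-i\theta}(A^*)^{-1}\big)$ is correct and plays the role of the paper's partial-fraction identity for $(r^2-\sigma A^*)^{-1}r^2-(R^2-\sigma A^*)^{-1}R^2$, and the convergence interchanges you defer are indeed routine.

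The gap sits exactly at the step you leave unverified: the final matching of coefficients does not produce the cancellation you assert. Carrying out your own expansions, the first integral gives $\sum_{k\ge0}c_kA^k+\sum_{n\ge1}r^{2n}c_{-n}(A^*)^n$ and the second gives $c_0+\sum_{n\ge1}c_{-n}A^{-n}+\sum_{n\ge1}r^{2n}c_n(A^*)^{-n}$, as you say; but the third, coming from $M(\theta,A^*)^{-1}=\frac{R^2-r^2}{2\pi}\sum_{l,m\ge0}r^{2l}R^{-2-2m}e^{i(m-l)\theta}(A^*)^{m-l}$ in which \emph{every} coefficient is positive, collapses to $+c_0+\sum_{n\ge1}r^{2n}c_{-n}(A^*)^n+\sum_{n\ge1}r^{2n}c_n(A^*)^{-n}$. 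It therefore \emph{duplicates} the constant and the two spurious tails instead of removing them, and the three integrals sum to $f(A)+2c_0+2(\hbox{tails})$. The case $f\equiv1$ already exposes this without any series work: the first two integrals each equal $I$ (the normalization you yourself cite), and $\int_0^{2\pi}M(\theta,A^*)^{-1}d\theta$ has positive real part because $\Re M(\theta,A^*)>0$ (Lemma \ref{Lemma 41}) implies $\Re\big(M(\theta,A^*)^{-1}\big)=(M^*)^{-1}(\Re M)M^{-1}>0$; in fact a direct summation gives $\int_0^{2\pi}M(\theta,A^*)^{-1}d\theta=I$, so the right-hand side is $3I$ rather than $I$. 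The identity only holds with a minus sign in front of the third integral (equivalently, with $M$ replaced by $-M$), which is what the contour computation actually produces, since $\frac{r^2}{r^2-a}-\frac{R^2}{R^2-a}=-\frac{R^2-r^2}{R^2+r^2-a-a^{-1}}$ for $a=e^{i\theta}A^*$. So your sketch cannot be completed for the formula as printed; once you flip that sign the rest of your plan goes through, and the later estimates are unaffected since only the norm of the third term is used.
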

\begin{proof} We get from the Cauchy formula
\begin{equation*}
f(A)=\tfrac{1}{2\pi i}\int_{\partial X}f(\sigma)\,((\sigma\!-\!A)^{-1}\,d\sigma-
(\bar\sigma\!-\!A^*)^{-1}\,d\bar\sigma)
+\tfrac{1}{2\pi i}\int_{\partial X}f(\sigma)\,
(\bar\sigma\!-\!A^*)^{-1}\,d\bar\sigma=F_{1}+F_{2}.
\end{equation*}
Let us set $\Gamma_{\rho}=\{\rho\,e^{i\theta};\theta\in [0,2\pi]\}$.
The part $\Gamma_R$ of $\partial X$ is counterclockwised oriented and, with $\sigma=Re^{i\theta}$, we have
\begin{align*}
\tfrac{1}{2\pi i}((\sigma\!-\!A)^{-1}\,d\sigma-
(\bar\sigma\!-\!A^*)^{-1}\,d\bar\sigma)
&=\tfrac{1}{2\pi }((Re^{i\theta}\!-\!A)^{-1}\,Re^{i\theta}+
(Re^{-i\theta}\!-\!A^*)^{-1}\,Re^{-i\theta})\,d\theta\\
&=\tfrac{1}{2\pi }((1\!-\!e^{-i\theta}r A)^{-1}+(1\!-\!e^{i\theta}r A^*)^{-1})\,d\theta\\
&=\tfrac{1}{2\pi}\,d\theta+\mu(\theta,A)\,d\theta.
\end{align*}
The other component $\Gamma_r$ is clockwised oriented and, with $\sigma=re^{i\theta}$, we have
\begin{align*}
\tfrac{1}{2\pi i}((\sigma\!-\!A)^{-1}\,d\sigma-
(\bar\sigma\!-\!A^*)^{-1}\,d\bar\sigma)
&=\tfrac{1}{2\pi }((re^{i\theta}\!-\!A)^{-1}\,re^{i\theta}+
(re^{-i\theta}\!-\!A^*)^{-1}\,re^{-i\theta})\,d\theta\\
&=\tfrac{1}{2\pi}\,d\theta-\mu(-\theta,A^{-1})\,d\theta.
\end{align*}
Noticing that $\int_0^{2\pi}f(Re^{i\theta})\,d\theta=\int_0^{2\pi}f(re^{i\theta})\,d\theta$, we obtain that
\begin{equation*}
F_{1}=\int_0^{2\pi}f(Re^{i\theta})\,\mu(\theta,A)\,d\theta+
\int_0^{2\pi}f(re^{i\theta})\,\mu(-\theta,A^{-1})\,d\theta.
\end{equation*}
We consider now the second term $F_2$. On the component $\Gamma_{R}$ we have $\bar \sigma=R^2/\sigma$,
and thus
\begin{align*}
\tfrac{1}{2\pi i}\int_{\Gamma_R}f(\sigma)\,(\bar\sigma\!-\!A^*)^{-1}\,d\bar\sigma
&=-\tfrac{1}{2\pi i}\int_{\Gamma_R}f(\sigma)\,
(R^2\!-\!\sigma A^*)^{-1}\,\frac{R^2}{\sigma}\,d\sigma\\
&=-\tfrac{1}{2\pi i}\int_{\Gamma_1}f(\sigma)\,
(R^2\!-\!\sigma A^*)^{-1}\,\frac{R^2}{\sigma}\,d\sigma.
\end{align*}
Indeed, the last integrand is holomorphic in $\sigma$. Hence we
can replace the integration path $\Gamma_R$ by $\Gamma_1$
(counterclockwised oriented). We similarly have for the second
component
\begin{align*}
\tfrac{1}{2\pi i}\int_{\Gamma_r}f(\sigma)\,(\bar\sigma\!-\!A^*)^{-1}\,d\bar\sigma
=\tfrac{1}{2\pi i}\int_{\Gamma_1}f(\sigma)\,
(r^2\!-\!\sigma A^*)^{-1}\,\frac{r^2}{\sigma}\,d\sigma
\end{align*}
by taking into account the opposite orientation of $\Gamma_r$.
Therefore
\begin{align*}
F_2&=\tfrac{1}{2\pi i}\int_{\Gamma_1}f(\sigma)\,
\big((r^2\!-\!\sigma A^*)^{-1}\,\frac{r^2}{\sigma}-(R^2\!-\!\sigma A^*)^{-1}\,\frac{R^2}{\sigma}\big)
\,d\sigma\\
&=\int_0^{2\pi}f(e^{i\theta})\,M(\theta,A^*)^{-1}\,d\theta,
\end{align*}
which completes the proof of the lemma.
\end{proof}

\section{ The complete bound in an annulus}

We keep the notation from the previous section. The
following lemma shows that $\Re M(\theta, A^*)$ is a positive
operator.

\begin{lemma} \label{Lemma 41}
Assume that $\|A\| < R$ and $\|A^{-1}\| < R$. Let $r = R^{-1}$.
Then we have the lower bound
\begin{equation*}
\Re M(\theta, A^*)\geq N(\theta):=\frac{2\pi}{R^2-r^2}
   \left((R^2+r^2-R-r) + \frac{R+r+2}{4}
   \left(2-e^{i\theta}U^*-e^{-i\theta}U\right)\right),
\end{equation*}
where $U$ denotes the unitary operator such that $A=U G$, with $G$
self-adjoint positive definite. Also, $N(\theta)$ is a positive
invertible operator.
\end{lemma}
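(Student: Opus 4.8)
The plan is to compute the Hermitian part $\Re M(\theta,A^*)=\tfrac12\big(M(\theta,A^*)+M(\theta,A^*)^*\big)$ explicitly and then reduce the claimed inequality, via the polar decomposition $A=UG$, to a statement involving only the unitary $V:=e^{-i\theta}U$ and the positive self-adjoint operator $H:=G+G^{-1}$. Writing $c:=\frac{2\pi}{R^2-r^2}>0$ and $B:=e^{-i\theta}A$, I note that $e^{i\theta}A^*=B^*$ and $(e^{i\theta}A^*)^{-1}=(B^{-1})^*$, so $M(\theta,A^*)=c\big(R^2+r^2-(B+B^{-1})^*\big)$ and hence
$$\Re M(\theta,A^*)=c\left(R^2+r^2-\Re B-\Re(B^{-1})\right).$$
Since $B=VG$ and $B^{-1}=G^{-1}V^*$, the cross terms collapse: $\Re B+\Re(B^{-1})=\tfrac12\big(V(G+G^{-1})+(G+G^{-1})V^*\big)=\Re(VH)$, because $H$ is self-adjoint. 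Thus $\Re M(\theta,A^*)=c\big(R^2+r^2-\Re(VH)\big)$.

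Next I would rewrite the target in the same variables. Setting $s:=R+r$ and using $e^{i\theta}U^*+e^{-i\theta}U=V^*+V=2\Re V$, the operator becomes $N(\theta)=c\big((R^2+r^2-s)I+\tfrac{s+2}{2}(I-\Re V)\big)$. Dividing $\Re M-N$ by $c>0$ and cancelling the common $R^2+r^2$, the desired bound reduces to
$$sI-\Re(VH)-\tfrac{s+2}{2}(I-\Re V)\geq 0.$$
Multiplying by $4$ and grouping the terms linear in $V$ and $V^*$, I introduce the self-adjoint operator $C:=(s+2)I-2H$; since $C^*=C$ one has $VC+CV^*=2\Re(VC)$, and the inequality turns into exactly $\Re(VC)\geq(2-s)I$.

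The crucial, and in fact only nontrivial, step is a spectral bound on $C$. The hypotheses $\|A\|<R$ and $\|A^{-1}\|<R$ become, through $A=UG$ with $U$ unitary, the constraint $rI\leq G\leq RI$; applying the scalar map $t\mapsto t+t^{-1}$, whose range over $[r,R]=[R^{-1},R]$ is $[2,R+R^{-1}]=[2,s]$, gives $2I\leq H\leq sI$, whence $(2-s)I\leq C\leq(s-2)I$ and therefore $\|C\|\leq s-2$. As $V$ is unitary, $\|VC\|=\|C\|\leq s-2$, and the elementary fact $\Re T\geq-\|T\|\,I$ for any bounded $T$ applied to $T=VC$ yields $\Re(VC)\geq-(s-2)I=(2-s)I$, which is precisely the required inequality. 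Finally, $N(\theta)$ is positive invertible since $R^2+r^2-s=(R-1)^2(R^2+R+1)/R^2>0$ for $R>1$ and $I-\Re V\geq 0$, so $N(\theta)\geq c\,(R^2+r^2-s)\,I>0$. I expect the main obstacle to be purely organizational: carefully passing from $A$ to the pair $(V,H)$ and recognizing that the noncommutativity of $V$ and $H$ is harmless once everything is packaged as $\Re(VC)$ with $C$ self-adjoint, after which the norm estimate closes the argument immediately.
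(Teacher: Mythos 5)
Your proof is correct and follows essentially the same route as the paper's: after passing to the polar decomposition $A=UG$, your key estimate $\|C\|=\|(s+2)I-2(G+G^{-1})\|\le s-2$ is exactly the paper's bound $\|G+G^{-1}-\tfrac{R+r+2}{2}\|\le \tfrac{R+r-2}{2}$ (centering $x+x^{-1}$ at the midpoint of its range $[2,R+r]$), combined with the same inequality $\Re T\ge -\|T\|\,I$ applied to the product with the unitary factor. The only difference is organizational: you subtract $N(\theta)$ first and simplify to $\Re(VC)\ge (2-s)I$, whereas the paper derives the lower bound directly.
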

\begin{proof}We have
\begin{align*}
  \frac{R^2-r^2}{2\pi}\Re M(\theta, A^*)&=
   R^2+r^2-\Re((e^{-i\theta}A)^{-1}+e^{i\theta}A^*)\\&=
   R^2+r^2 - \Re\big(e^{i\theta}(G^{-1}+G )U^*\big)\\&=
   R^2+r^2 - \tfrac{R+r+2}{2} \Re\big(e^{i\theta} U^*\big)
   - \Re\big(e^{i\theta}(G^{-1}+G-\tfrac{R+r+2}{2} )U^*\big)
\end{align*}
We note that the assumptions $\|A\|\leq R$ and $\|A^{-1}\|\leq R$
are equivalent to $\|G\|\leq R$ and $\|G^{-1}\|\leq R$. Since $G$
is self-adjoint, this means that $r\leq G \leq R$, and hence
\begin{equation*}
\|  G^{-1}+G-\tfrac{R+r+2}{2} \|\leq \sup_{r\leq x\leq R} |
x^{-1}+x-\tfrac{R+r+2}{2}| = \tfrac{R+r-2}{2} .
\end{equation*}
It follows that
\begin{align*}
  \frac{R^2-r^2}{2\pi}\Re M(\theta, A^*)&\geq
   R^2+r^2 - \tfrac{R+r+2}{2} \Re\big(e^{i\theta} U^*\big)
   - \tfrac{R+r-2}{2}
   \\&= R^2 + r^2 - R - r + \tfrac{R+r+2}{2} \Re\big(1-e^{i\theta}
   U^*\big),
\end{align*}
which completes the proof of the lemma.
\end{proof}

\begin{proof}[Proof of the upper bound of Theorem \ref{thm1}]
We can suppose that $\|A\| < R$ and $\|A^{-1}\| < R$. Using the
notation of Lemma~\ref{le}, it follows from the condition $\|A\|<
R$ that $\mu(\theta,A )\geq 0$ for all $\theta\in \R$. Therefore
we have
\begin{equation*}
\Big\| \int_0^{2\pi}\!\!
f(Re^{i\theta})\,\mu(\theta,A)\,d\theta\Big\|\leq \Big\|
\int_0^{2\pi}\!\!  \mu(\theta,A)\,d\theta\Big\|\ \|f\|_{X}
=\|f\|_{X} .
\end{equation*}
Here we have used that $\int_0^{2\pi}\!\!
\mu(\theta,A)\,d\theta=1$, which follows from the residue formula.
Similarly we have $\mu(-\theta,A^{-1})\geq0$ and we get the
estimate
\begin{equation*}
\Big\| \int_0^{2\pi} \!\!
f(re^{i\theta})\,\mu(-\theta,A^{-1})\,d\theta\Big\| \leq \|f\|_{X}
.
\end{equation*}
Using Lemma \ref{le} and the positivity of $\Re M(\theta,A^*)$ for all $\theta\in \R$ (Lemma \ref{Lemma 41}) we obtain the estimate
\begin{equation*}
\|f(A)\|\leq K\,\|f\|_{X},\quad \hbox{with}\quad
K=2+\Big\|\int_0^{2\pi}(\Re M(\theta, A^*))^{-1}\,d\theta\Big\|.
\end{equation*}
Let $\rho : R(X) \mapsto \mathcal{L}(H)$ be the homomorphism given by $\rho(f) = f(A)$. Therefore the norm of $\rho$ is bounded by $K$. Furthermore, since we only have used arguments based on positivity of operators, it is easily seen that the complete bounded norm $\|\rho\|_{cb}$ is also bounded by $K$.

Taking into account the bound of Shields \cite{shields}, for establishing the
upper bound of Theorem~\ref{thm1} it suffices now to show that
\begin{equation}\label{rouge}
  \Big\|\int_0^{2\pi}(\Re M(\theta,A^*))^{-1}\,d\theta\Big\|
  \leq \sqrt{\frac{R^2+2R+1}{R^2+R+1}} \leq \frac{2}{\sqrt{3}}.
\end{equation}
Consider the function
\begin{equation*}
   J(z):=\frac{R^2-r^2}{2\pi}\int_{0}^{2\pi}
   \left((R^2+r^2-R-r) + \frac{R+r+2}{4}
   \left(2-e^{i\theta}z^{-1} - e^{-i\theta}z\right)\right)^{-1}
   \, d\theta.
\end{equation*}
Since $U$ is a unitary operator, it follows from Lemma \ref{Lemma
41} that
\begin{equation*}
\Big\|\int_0^{2\pi}(\Re M(\theta, A^*))^{-1}\,d\theta\Big\| \leq
\Big\|\int_0^{2\pi}(N(\theta))^{-1}\,d\theta\Big\| = \| J(U) \|
 = \sup \left\{ | J(e^{i\phi}) | : e^{i\phi} \in \sigma(U) \right\} .
\end{equation*}
On the other hand, we have
\begin{align*}
J(e^{i\varphi})
  &=\frac{R^2-r^2}{2\pi}
   \int_{0}^{2\pi}
   \frac{1}{(R^2+r^2-R-r) + \tfrac{R+r+2}{4} (2- 2\cos(\theta\!-\!\varphi))}
   \,d\theta\\
   &=\frac{R^2-r^2}{2\pi}
   \int_{-\infty}^{\infty}
   \frac{2}{(R^2+r^2-R-r)(1+s^2) + (R+r+2) s^2}
   \,ds\\
   &=\frac{R^2-r^2}{2\pi}
   \int_{-\infty}^{\infty}
   \frac{2}{(R^2+r^2-R-r)+  (R^2+r^2+2) s^2}
   \,ds\\ &=
   \sqrt{\frac{R^2+2R+1}{R^2+R+1}} \, = \, \sqrt{\frac{1}{1-\frac{1}{(\sqrt{R}+1/\sqrt{R})^2}}}
   \, \leq \, \frac{2}{\sqrt{3}},
\end{align*}
which implies (\ref{rouge}). This gives a proof of the upper bound of
Theorem \ref{thm1} for $K_{cb}(R)$.
\end{proof}



\begin{proof}[Proof of the lower bound of Theorem \ref{thm1}]
 For $t \in \C$, let $A(t) = \left(\begin{array}{cc}
            1 & t  \\
            0 & 1
        \end{array}\right) \
$ with inverse $A(t)^{-1} = \left(\begin{array}{cc}
            1 & -t  \\
            0 & 1
        \end{array}\right) \ $
acting on the Hilbert space $\C^2$. For $t_0 = R - R^{-1}$ we have
$\|A(t_0)\| = \|A(t_0)^{-1}\| = R$ (compare with \cite[p. 152]{paul}). We
will make use of the following result from geometric function theory about
the
infinitesimal Carath\'eodory metric: it is shown by Simha in
\cite[Example~(5.3)]{simha} that
\begin{equation*}
    \sup \left\{ \frac{| f'(1) |}{\|f\|_{X}} :
    \mbox{$f$ analytic in $X$ and $f(1)=0$} \right\}
    = \frac{2}{R} \prod_{n=1}^\infty \Bigl(\frac{1-R^{-8n}}{1-R^{4-8n}}
    \Bigr)^2,
\end{equation*} with the supremum being attained for some function
$f_0$ analytic in $X$, with $\|f_0\|_{X}=1$ and $f_0(1)=0$.
Therefore
\begin{eqnarray*}
    K(R) &\geq& \frac{1}{\| f_0 \|_X} \| f_0(A(t_0)) \| =
    \left\|\left(\begin{array}{cc}
            f_0(1) & t_0 f_0'(1)  \nonumber \\
            0 & f_0(1)
        \end{array}\right)\right\| = t_0 \, | f_0'(1) | = \gamma(R)
\end{eqnarray*}
with \begin{eqnarray*}
     \gamma(R) &:=& 2 (1-R^{-2}) \prod_{n=1}^\infty
\Bigl(\frac{1-R^{-8n}}{1-R^{4-8n}}
    \Bigr)^2
    = \frac{2}{1+R^{-2}}
    \prod_{n=1}^\infty \frac{(R^{4n}-R^{-4n})^2}{(R^{4n}-R^{4-4n})(R^{4n}-R^{-4-4n})}
    \\&=& \frac{2}{1+R^{-2}}
    \prod_{n=1}^\infty \Bigl(1-\frac{(R^2-R^{-2})^2}{(R^{4n} -
    R^{-4n})^2}\Bigr)^{-1}
    .
\end{eqnarray*}
This yields the estimate \begin{equation}\label{eq2}
    K(R) > \frac{2}{1+R^{-2}},
\end{equation} as claimed in Theorem~\ref{thm1}.
It remains to justify why we are allowed to take for a lower bound
of $K(R)$ the function $f_0$ which is not a rational function.
Indeed, by using instead of $f_0$ partial sums of the Laurent
expansion of an extremal function for the infinitesimal
Carath\'eodory metric on the annulus $1/R' < |z| < R'$ for some
$R'>R$ we obtain the same conclusion after taking the limit $R'
\to R$.
\end{proof}

\begin{figure}[t]
 \centerline{\epsfig{file=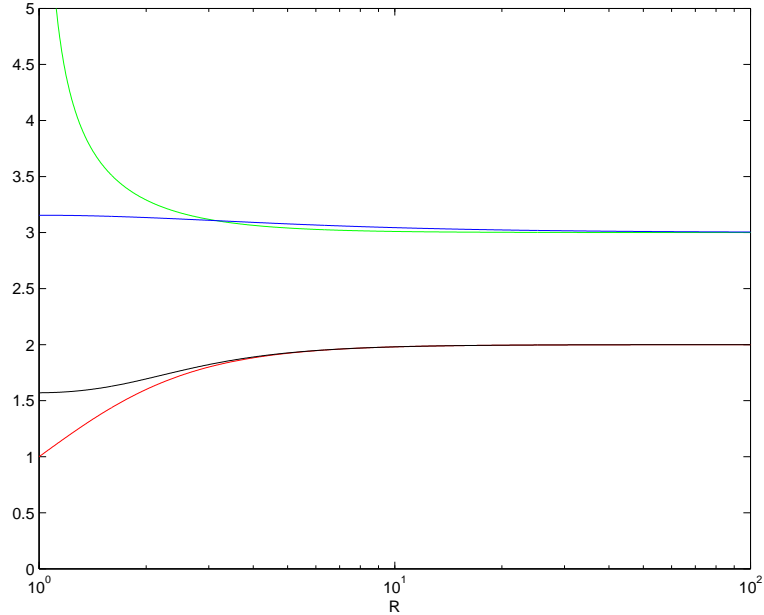,scale=.6}}
 \caption{{\it The two upper bounds and the lower bound for $K(R)$ from
 Theorem~\ref{thm1}, and the lower bound $\gamma(R)$
 from the proof of Theorem~\ref{thm1}.}}\label{figure2}
\end{figure}

\begin{rem}
  The final estimate (\ref{eq2}) of the preceding proof is not very sharp
  for $R$ close to one (see Figure~\ref{figure2}), and $\gamma(R)$ is
  a sharper but less readable lower bound for $K(R)$.
  For instance, for $R\to 1$ the lower bound
  $2/(1+R^{-2})$ of Theorem~\ref{thm1} tends to $1$ but
  \begin{equation*}
      \lim_{R\to 1} \gamma(R)= \lim_{R\to 1}
      \prod_{n=1}^\infty \Bigl(1-\frac{(R^2-R^{-2})^2}{(R^{4n} -
    R^{-4n})^2}\Bigr)^{-1} = \prod_{n=1}^\infty
    \Bigl(1-\frac{1}{4n^2}\Bigr)^{-1} = \frac{\pi}{2} .
  \end{equation*}
In contrast, for our fixed matrix $A(t_0)$, it follows from \cite[Theorem 1]{FR} and \cite{simha} that the function $f_0$ is extremal within the class of functions analytic in $X$. 
\end{rem}


\begin{proof}[Proof of Corollary \ref{cor1}]
We use the terminology of Paulsen's book \cite{paul}. Let $\rho : R(X) \mapsto \mathcal{L}(H)$ be the homomorphism given by $\rho(f) = f(A)$. Theorem \ref{thm1} implies that the complete bounded norm $\|\rho\|_{cb}$ of $\rho$ is bounded by $2+2/\sqrt{3}$. Using a theorem of Paulsen \cite[Theorem 9.1]{paul}, there exists an invertible operator $L$ with $\|L\|\cdot\|L^{-1}\| = \|\rho\|_{cb} \le 2+2/\sqrt{3}$ such that $L^{-1}\rho(\cdot)L$ is a unital completely contractive homomorphism. Thus $X$ is a complete spectral set for $L^{-1}AL$. Therefore, as a consequence of Arveson's extension theorem (see \cite[Corollary 7.8]{paul}), $L^{-1}AL$ has a normal dilation with spectrum included in $\partial X$, as claimed in Corollary \ref{cor1}.
\end{proof}

\begin{rem}
According to a deep result due to Agler \cite{agler}, if $X$ is a spectral set for $A$, then $X$ is a complete spectral set for $A$, and thus $A$ has a normal dilation with spectrum included in $\partial X$.
The analogue of Agler's theorem is not true for triply connected domains (see \cite{DM}).
\end{rem}

\bibliographystyle{amsplain}

\begin{thebibliography}{99}

\bibitem{agler}  J.\, Agler, Rational dilation on an annulus, {\it Ann. of Math.} (2) {\bfseries 121} (1985), 537--563.

\bibitem{bcd} C. \, Badea, M.\, Crouzeix and B. \,  Delyon, 
Convex domains and $K$-spectral sets,
{\it Math. Z.} {\bfseries 252} (2006), 345--365. 

\bibitem{becr} B. \,Beckermann and M. \,Crouzeix, A lenticular version of a von Neumann inequality, {\it Arch. Math.}(Basel), {\bfseries 86} (2006), 352--355.

\bibitem{conway} J.B.\, Conway,
{\it The theory of subnormal operators},
Mathematical Surveys and Monographs, 36. American Mathematical Society, Providence, RI, 1991.

\bibitem{crde} M.\,Crouzeix, B.\,Delyon, Some estimates for analytic functions of strip or sectorial operators,
 {\it Arch. Math.}(Basel), {\bfseries 81} (2003), 553-566.

 \bibitem{crzx} M.\,Crouzeix, Numerical range and functional calculus in Hilbert space, {\it J. Funct. Anal.}, {\bfseries 244} (2007), 668--690.

 \bibitem{dopa} R.G. Douglas and V.I. Paulsen, Completely bounded maps and hypo-Dirichlet algebras, {\it Acta Sci. Math.}(Szeged), {\bfseries 50} (1986), 143--157.

\bibitem{DM} M.A. Dritschel and S. McCullough, The failure of rational dilation on a triply connected domain, {\it  J. Amer. Math. Soc.}  {\bfseries 18}  (2005), 873--918.

\bibitem{FR} S. Fu and B. Russo,
Spectral domains in several complex variables,
{\it Rocky Mountain J. Math.} {\bfseries 27} (1997), 1095--1116.

 \bibitem{lewis} K. A. Lewis,
Intersections of $K$-spectral sets,
{\it J. Operator Theory} {\bfseries 24} (1990), 129--135.

\bibitem{misra} G.\, Misra, Curvature inequalities and extremal
properties of bundle shifts,  {\it J. Operator Theory}
{\bfseries 11}  (1984), 305--317.

\bibitem{Neu} J.\, von Neumann, Eine Spektraltheorie f\"ur allgemeine Operatoren eines unit\"aren Raumes, {\it  Math. Nachr. } {\bfseries 4} (1951), 258--281.

\bibitem{paulsen} V.I.\, Paulsen,
{\it Toward a theory of $K$-spectral sets}, in : Surveys of some recent results in operator theory, Vol. I, 221--240,
Pitman Res. Notes Math. Ser., 171, Longman Sci. Tech., Harlow, 1988.

\bibitem{paul} V.I.\,Paulsen,
{\it Completely bounded maps and operator algebras},
Cambridge Studies in Advanced Mathematics, 78. Cambridge University Press, Cambridge, 2002.

\bibitem{PS} M.\, Putinar and S.\, Sandberg,
A skew normal dilation on the numerical range of an operator,
{\it Math. Ann.} {\bfseries 331} (2005), no. 2, 345--357. 

\bibitem{riesz} F.\ Riesz, B.\ Sz.-Nagy, {\it Functional
Analysis},
Books on Advanced Mathematics. Dover Publications, New York, 1990.

\bibitem{shields} A.L.\, Shields,
{\it Weighted shift operators and analytic function theory}, in : Topics in operator theory, pp. 49--128. Math. Surveys, No. 13, Amer. Math. Soc., Providence, R.I., 1974.

\bibitem{simha} R.R.\ Simha, The Carath\'eodory metric of the
annulus,  {\it Proc. Amer. Math. Soc.} {\bfseries 50}  (1975), 162-166.


\bibitem{stampfliI} J.G. Stampfli, Surgery on spectral sets, {\it J. Operator Theory}, {\bfseries 16} (1986), no 2, 235--243.

\bibitem{stampfliII} J.G. Stampfli, Surgery on spectral sets. II. The multiply connected case, {\it Integral Equations Operator Theory } {\bfseries 13}  (1990), 421--432.

\bibitem{williams} J.P.\, Williams,
Minimal spectral sets of compact operators,
{\it Acta Sci. Math.}(Szeged) {\bfseries 28} (1967) 93--106.
\end{thebibliography}

\bigskip

\noindent
Laboratoire Paul Painlev\' e,
UMR CNRS no. 8524,\\
Universit\'e de Lille 1,
59655 Villeneuve d'Ascq Cedex, France
\\
{\tt Catalin.Badea@math.univ-lille1.fr}\\

\noindent
Laboratoire Paul Painlev\' e,
UMR CNRS no. 8524,\\
Universit\'e de Lille 1,
59655 Villeneuve d'Ascq Cedex, France
\\
{\tt Bernhard.Beckermann@math.univ-lille1.fr}\\

\noindent Institut de Recherche Math\'ematique de Rennes, UMR 6625 au CNRS,\\
Universit\'e de Rennes 1,
Campus de Beaulieu, 35042 RENNES Cedex, France\\
{\tt michel.crouzeix@univ-rennes1.fr}
\end{document}